\def\ctg {\ensuremath{\mathrm{ctg}}}
\newtheorem{remark}{Remark}[section]
\title{Convergence analysis of projected fixed-point iteration on a low-rank matrix manifold \footnotemark[1]}
\author{D. A. Kolesnikov \footnotemark[3] \and I. V.~Oseledets\footnotemark[3]\ \footnotemark[5] }
\begin{document}
\maketitle

\renewcommand{\thefootnote}{\fnsymbol{footnote}}
\footnotetext[1]{This work was supported by Russian Science Foundation grant 14-1100659}
\footnotetext[3]{Skolkovo Institute of Science and Technology,
Novaya St.~100, Skolkovo, Odintsovsky district, 143025 
Moscow Region, Russia (denis.kolesnikov@skoltech.ru, i.oseledets@skoltech.ru)}
\footnotetext[5]{Institute of Numerical Mathematics,
Gubkina St. 8, 119333 Moscow, Russia}

\renewcommand{\thefootnote}{\arabic{footnote}}
\newcounter{Ivan}
\newcommand{\Ivan}[1]{
	\refstepcounter{Ivan}{
		\todo[inline,color={green!11!blue!22},size=\small]{\textbf{[IVAN\theIvan]:}~#1}
	}
}

\newcounter{Denis}
\newcommand{\Denis}[1]{
	\refstepcounter{Denis}{
		\todo[inline,color={green!11!green!22},size=\small]{\textbf{[Denis\theDenis]:}~#1}
	}
}

\newcommand{\hlc}[2][yellow]{ {\sethlcolor{#1} \hl{#2}} }

\newcolumntype{C}[1]{>{\Centering}m{#1}}
\renewcommand\tabularxcolumn[1]{C{#1}}
\def\Twx {\ensuremath{T(\widehat{x})}}
\begin{abstract} 
In this paper we analyse convergence of projected fixed-point iteration
on a Riemannian manifold of matrices with fixed rank.
As a retraction method we use ``projector splitting scheme''.
We prove that the projector splitting scheme converges at least with the same
rate as standard fixed-point iteration without rank
constraints.
We also provide counter-example to the case when
conditions of the theorem do not hold.
Finally we support our theoretical results with numerical experiments.
\end{abstract}

\begin{keywords}
fixed-point iteration, Riemannian optimization framework, low-rank approximation
\end{keywords}

\begin{AMS}
93B40, 58C30, 47J25, 65F30.
\end{AMS}



\section{Introduction}
In many applications it is well-known that the solution of the optimization problem can be approximated by low-rank matrices or tensors, i.e. it lies 
on a certain manifold \cite{oseledets-survey-2015, absil-opt-2009}. Thus, instead of minimizing the full functional, the framework of Riemannian optimization can be very effective in terms 
of storage \cite{udriste-riemannian-1994, ma-manifold-2011}. There are different approaches for the optimization over low-rank manifolds, including projection onto the tangent space \cite{lubich-timett-2015}
conjugate-gradient type methods \cite{sato-cg-2015}, second-order methods \cite{absil-newton-2009}. The manifolds of matrices with bounded ranks 
and tensors with fixed tensor train and hierarchical ranks are of crucial importance in many high-dimensional problems, and are examples 
of Riemannian manifolds with a very particular polylinear structure. In this paper we consider the two-dimensional (matrix) case and study 
the convergence of the projected gradient-type methods and show that if the original method converges, its manifold version based
on the so-called \emph{projector-splitting method} is guaranteed to converge at least with the same rate and some additional conditions on the initial 
approximation.  This is up to a certain extent an unexpected result, since the standard estimates include the curvature of the manifold. 
For the manifold of matrices of rank $r$, the curvature is given by
$1 / \sigma_{\min}$, i.e. if the matrix is close to the matrix of a
smaller rank, such estimates are useless in practice. Our results show
that the curvature is not important for the convergence. 

Consider an iterative process 
\begin{equation}\label{thm:fullit}
    X_{k+1} = \Phi(X_k), \quad k = 0, \ldots
   \end{equation}
where $Y_k \in \mathbb{R}^{n \times m}$ and $\Phi$ is a contraction with parameter $\delta$. Then, 
$X_k$ converges linearly to $X_*$, for $k \rightarrow \infty$, i.e. 
$$
   \Vert X_{k+1} - X_* \Vert \leq \delta \Vert X_k - X_* \Vert,  
$$
for some matrix norm $\Vert \cdot \Vert$.
Also we assume that the initial point and the final points are on the manifold, i.e. 
and 
$$
X_0, X_* \in \mathcal{M}_r, \quad \mathcal{M}_r = \left\{ X \bigg| \rank  X \leq r \right\}.
$$

From \eqref{thm:fullit} we create the 
projected version as
\begin{equation}\label{thm:projit}
Y_{k+1} =  I(Y_k, \Phi(Y_k) - Y_k), \quad k = 0, \ldots, 
\end{equation}
where $I(Z, H)$ is the \emph{projector-splitting integrator} \cite{lubich-timett-2015} which is known to be a \emph{retraction} to the manifold \cite{absil-newton-2009}.
There are many other possible choices for the retraction, but in this paper we consider only one of them and all the convergence estimates
are proven for the method \eqref{thm:projit}.

Our approach is based on the splitting the error $\Vert X_k - X_* \Vert$ into two components. The first component is
a projection on the tangent space of the manifold at some
intermediate point and shows how close current point to stationary
point in the sense of Riemannian metric on the manifold. The second
component is the projection on normal space at the same point and is
related to the manifold curvature. The typical case convergence is
presented at Figure \ref{fig:figure1a}. However, much more interesting pattern
is possible. See Figure \ref{fig:figure1b}.

\begin{figure}[H]
\centering
\begin{subfigure}{.45\textwidth}
\centering
\scalebox{0.38}{\input{prj-est01.pgf}}
\caption{Typical case convergence.}\label{fig:figure1a}
\end{subfigure}
\hspace{0.5cm}
\begin{subfigure}{.45\textwidth}
\centering
\scalebox{0.38}{\input{prj-est02.pgf}}
\caption{Stair case convergence.}\label{fig:figure1b}
\end{subfigure}
\end{figure}

In both cases, although the curvature influences only on but the
convergence is not worse than for the full case.

\section{Projector-splitting integrator}
The projector-splitting integrator was ori\-ginally proposed \cite{lubich-lrappr-2007} as an integration scheme for the equations of motions of dynamical low-rank approximation. However,
the only information it requires, are two matrices, $A_0$, $A_1$, at subsequent time steps. Thus it is very natural to consider it for the discrete time problems, 
and moreover, it can be formally viewed as a retraction onto the manifold of rank-$r$ matrices. It is formulated as follows.

Given a rank-$r$ matrix in the form $A_0 = U_0 S_0 V_0^{\top}, U_0^{\top} U_0 = V_0^{\top} V_0 = I_r$ and a direction $D$, it provides the retraction of $A_0 + D$ back onto the manifold by the
following steps:


\begin{algorithm}[!h]
 \KwData{$A_0 = U_0 S_0 V_0^{\top}, \quad D$}
 \KwResult{$A_1 = U_1 S_1 V_1^{\top}$}
$ U_1, S' = \mathrm{QR}(U_0 S_0 + D V_0)$\;
$S'' = S' - U_1^{\top} D V^{\top}_0$\;
$V_1, S^{\top}_1 = \mathrm{QR}(V_0 S''^{\top} + D^{\top} U_1)$\;
\caption{The projector splitting retraction}\label{thm:algi}
\end{algorithm}

Note that the QR-factorizations in the intermediate steps are non-unique, but the final result $U_1 S_1 V^{\top}_1$ does not depend on it. For the details we refer the reader
to \cite{lubich-timett-2015}. We will denote the result of Algorithm
\ref{thm:algi} as $I(A_0, D)$.
Define $\mathcal{T} (X)$ as the tangent space of $X \in \mathcal{M}_r$
The following Lemma provides a new interpretation of the projector-splitting integrator as a projection onto the tangent plane in some intermediate point.
\begin{lemma}
      Let $\rank A_0 = U_0 S_0 V^{\top}_0, \quad U^{\top}_0 U_0 = V^{\top}_0 V_0 = I_r$, $D \in \mathbb{R}^{n \times m}$. Then, 
      \begin{equation}\label{thm:projectorform}
         I(A_0, D) = P_{\mathcal{T} (X)}(A_0 + D), \quad I(A_0, D),
         A_0 \in \mathcal{T} (X).
     \end{equation}
      where $X$ is some matrix of rank $r$. 
\end{lemma}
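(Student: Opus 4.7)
\medskip

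\noindent\textbf{Proof proposal.} The plan is to guess the intermediate point $X$ directly from the intermediate QR of Algorithm~\ref{thm:algi} and then verify the identity by an explicit computation using the formula for the orthogonal projector onto the tangent space.

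The natural candidate is $X = U_1 V_0^{\top}$, where $U_1$ is the orthogonal factor produced in the first step of the algorithm; since $U_1$ and $V_0$ have orthonormal columns, $X$ is rank $r$, with column space $\mathrm{span}(U_1)$ and row space $\mathrm{span}(V_0)$. For such an $X$ the tangent-space projector is the standard three-term expression
\begin{equation*}
P_{\mathcal{T}(X)}(Y) \;=\; U_1 U_1^{\top} Y + Y V_0 V_0^{\top} - U_1 U_1^{\top} Y V_0 V_0^{\top},
\end{equation*}
which we can rewrite as $P_{\mathcal{T}(X)}(Y)=U_1 U_1^{\top} Y(I-V_0 V_0^{\top}) + Y V_0 V_0^{\top}$.

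Next I would apply this to $Y = A_0 + D$. Using $A_0 V_0 = U_0 S_0$ and the first step $U_1 S' = U_0 S_0 + D V_0$, one gets $Y V_0 V_0^{\top} = U_1 S' V_0^{\top}$. Also $Y(I-V_0 V_0^{\top}) = D(I-V_0 V_0^{\top})$. Collecting the terms,
\begin{equation*}
P_{\mathcal{T}(X)}(A_0+D) \;=\; U_1 S' V_0^{\top} + U_1 U_1^{\top} D - U_1 U_1^{\top} D V_0 V_0^{\top}.
\end{equation*}
On the other hand, from steps 2 and 3 of the algorithm, $V_1 S_1^{\top} = V_0 S''^{\top} + D^{\top} U_1$ with $S'' = S' - U_1^{\top} D V_0$, so
\begin{equation*}
I(A_0,D) = U_1 S_1 V_1^{\top} = U_1\bigl(V_0 S'^{\top} + (I-V_0 V_0^{\top})D^{\top} U_1\bigr)^{\top} = U_1 S' V_0^{\top} + U_1 U_1^{\top} D(I-V_0 V_0^{\top}),
\end{equation*}
which coincides exactly with the expression for $P_{\mathcal{T}(X)}(A_0+D)$. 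This proves the first claim.

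Finally, membership of $A_0$ in $\mathcal{T}(X)$ is immediate because the row space of $A_0$ equals $\mathrm{span}(V_0)$, hence $A_0 V_0 V_0^{\top} = A_0$ and so $P_{\mathcal{T}(X)}(A_0)=A_0$; membership of $I(A_0,D)$ is automatic from the formula just proved. The only mildly subtle point is the choice of $X$: the algorithm's output depends on $U_0,V_0,D$ only through $U_1$ and $V_0$, and matching this up with the tangent-space structure is what dictates the ansatz $X = U_1 V_0^{\top}$; after that the proof is routine algebra.
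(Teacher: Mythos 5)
Your proposal is correct and takes essentially the same approach as the paper: the paper's proof also selects the intermediate point as $X = U_1 S V_0^{\top}$ for an arbitrary nonsingular $S$ (your choice is $S=I$), so the tangent space is the one spanned by $U_1$ and $V_0$. The only difference is that you spell out, via the standard three-term projector formula, the algebraic verification that the paper asserts ``from the construction.''
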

\begin{proof}
    It is sufficient to select $X = U_1 S V^{\top}_0$ for any
    non-singular $S$, and $U_1$ is defined as in the Algorithm
    \eqref{thm:algi}.
Note from the construction, that both the initial and the final points lie in the tangent space $\mathcal{T}(X)$.
\end{proof}

\section{Decomposition of the error into the normal and tangent parts}
Let us write one step of the iterative process \eqref{thm:projit} 
as 
\begin{equation}\label{thm:onestep}
   Y_1 = I(Y_0, \Phi(Y_0) - Y_0).
   \end{equation}
Using the projector form \eqref{thm:projectorform} we have
$$
Y_1 = P_{\mathcal{T}(X)}(\Phi(Y_0)),
$$
and the error can be written as 
\begin{equation}\label{thm:errorcontrol}
   E_1 = Y_1 - X_* = P_{\mathcal{T}(X)}(\Phi(Y_0) - \Phi(X_*)) + P_{\mathcal{T}(X)}(X_*) - X_*. 
   \end{equation}
Due to the contraction property we can bound 
$$
   \Vert \Phi(Y_0) - \Phi(X_*) \Vert \leq \delta \Vert E_0 \Vert.
$$
It is natural to introduce the notation
$$
P_{\mathcal{T}(X)}(X_*) - X_* = -P^{\perp}_{\mathcal{T}(X)}(X_*),
$$
since it is the normal to the tangent space component of $X_*$ at point $X$.
Thus the error at the next step satisfies 
$$
  \varepsilon^2_1 = \Vert E_1 \Vert^2 = \varepsilon^2_{\tau} + \varepsilon^2_{\perp}. 
$$
From the definition it is easy to see that
$$
   \varepsilon_{\tau} = \Vert P_{\mathcal{T}(X)}(\Phi(Y_0) - \Phi(X_*)) \Vert \leq \Vert \Phi(Y_0) - \Phi(X_*) \Vert \leq \delta \varepsilon_0. 
$$
The estimate for the decay of $\varepsilon_{\perp} = \Vert P^{\perp}_{\mathcal{T}(X)}(X_*)\Vert$ is much less trivial.
\section{Estimate for the normal component of the error}
From the definition of the error we have
$$
   \Phi(Y_0) = X_* + H, 
$$
and $\Vert H \Vert \leq \delta \varepsilon_0$. 
Since $Y$ and $X_*$ are on the manifold, they admit factorizations
$$
   Y = U_0 S_0 V^{\top}_0, \quad X_* = U_* S_* V^{\top}_*,
$$
where $U_*, V_*, U_0$ and $V_0$ are orthonormal.
If $\varepsilon_0$ is small, one can expect that the subspaces spanned by columns of $V_0$ and $V_*$ are close; however, the estimates depend 
on the smallest singular values of $X_*$. 
The following Theorem gives a bound on the normal component. 

\begin{theorem}\label{thm:normalcomponent}
    Let $X_* = U_* S_* V^{\top}_*$, where $V_*^{\top} V_* = U_*^{\top}
    U_* = I_q, \quad q \leq r$ and $H$ is an $n \times m$ matrix, $V_0$ be an $m \times r$ matrix with orthonormal 
    columns and $U_1$ be any orthogonal basis for the column space of the matrix $(X_* + H)V_0.$  Then, the norm of $P^{\perp}(X_*)$ defined as 
    \begin{equation}\label{thm:prjcomp}
       P^{\perp}(X_*) = (I - U_1 U^{\top}_1) X_* (I - V_0 V_0^{\top}). 
    \end{equation}
    can be bounded as
    \begin{equation}\label{thm:norm_normal}
        \Vert P^{\perp}(X_*) \Vert \leq \Vert H \Vert \Vert \tan \angle (V_0, V_*) \Vert. 
\end{equation}
\end{theorem}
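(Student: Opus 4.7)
\medskip

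\noindent\textbf{Proof proposal.} The plan is to exploit the two structural facts built into the theorem: (i) $U_1$ is defined as a basis for the range of $(X_*+H)V_0$, which forces the residual of $X_* V_0$ in the orthogonal complement of $U_1$ to be controlled by $H$; and (ii) the trailing factor $(I-V_0 V_0^\top)$ kills the part of $V_*$ lying in $V_0$, so what survives is precisely the "sine" of the principal angles between $V_0$ and $V_*$. Combining (i) and (ii) against each other should produce the tangent.

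First, from the definition of $U_1$,
$$
(I-U_1 U_1^\top)(X_*+H)V_0 = 0
\quad\Longrightarrow\quad
(I-U_1 U_1^\top)X_* V_0 = -(I-U_1 U_1^\top) H V_0.
$$
Second, I would decompose $V_*$ in the orthonormal basis $[V_0, V_0^\perp]$ of $\mathbb{R}^m$: writing $M=V_0^\top V_*$ and $N=(V_0^\perp)^\top V_*$, one has $V_*=V_0 M + V_0^\perp N$, and therefore
$$
X_*(I-V_0 V_0^\top) = U_* S_* N^\top (V_0^\perp)^\top,
\qquad
X_* V_0 = U_* S_* M^\top.
$$
Under the nondegeneracy hypothesis that $M$ has full column rank (otherwise $\tan\angle(V_0,V_*)$ is infinite and the bound is vacuous), $M^\top$ admits the right inverse $(M^\top)^\dagger = M(M^\top M)^{-1}$, so $U_* S_* = X_* V_0 (M^\top)^\dagger$, which gives
$$
X_*(I-V_0 V_0^\top) = X_* V_0\, (N M^\dagger)^\top (V_0^\perp)^\top.
$$

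Third, assembling these two pieces yields
$$
P^\perp(X_*) = (I-U_1 U_1^\top)X_* V_0\,(N M^\dagger)^\top (V_0^\perp)^\top
            = -(I-U_1 U_1^\top) H V_0\,(N M^\dagger)^\top (V_0^\perp)^\top.
$$
Taking spectral norms and using that $V_0$, $V_0^\perp$ have orthonormal columns and $I-U_1 U_1^\top$ is an orthogonal projector, I obtain $\|P^\perp(X_*)\| \leq \|H\|\,\|N M^\dagger\|$.

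The last step, and the main obstacle, is to identify $\|N M^\dagger\|$ with $\|\tan\angle(V_0,V_*)\|$ cleanly, especially in the rectangular case $r>q$. I would invoke the CS decomposition: write $M = Q_1\,(\cos\Theta)\,Q_2^\top$ with $Q_1$ having $r$ rows and $q$ orthonormal columns, and correspondingly $N = \widetilde Q_1\,(\sin\Theta)\,Q_2^\top$ sharing the right singular factor. Then $M^\dagger = Q_2(\cos\Theta)^{-1}Q_1^\top$ and
$$
N M^\dagger = \widetilde Q_1\,(\tan\Theta)\,Q_1^\top,
$$
whose singular values are exactly $\tan\theta_i$, so $\|N M^\dagger\|=\|\tan\Theta\|=\|\tan\angle(V_0,V_*)\|$. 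Substituting into the previous inequality yields \eqref{thm:norm_normal}.
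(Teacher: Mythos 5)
Your proposal is correct and follows essentially the same route as the paper: both proofs hinge on the range condition $(I-U_1U_1^{\top})(X_*+H)V_0=0$ to replace $(I-U_1U_1^{\top})X_*V_0$ by $-(I-U_1U_1^{\top})HV_0$, and then factor $X_*(I-V_0V_0^{\top})$ through $X_*V_0$ times a matrix whose norm equals $\Vert\tan\angle(V_0,V_*)\Vert$. The only difference is bookkeeping: you keep the rectangular $M=V_0^{\top}V_*$, use its Moore--Penrose pseudoinverse and the CS decomposition to get $\Vert NM^{\dagger}\Vert=\Vert\tan\Theta\Vert$ (handling the rank-deficient case as a vacuous bound), whereas the paper first rotates $V_0$ so that $V_*^{\top}V_0=\begin{bmatrix}\widehat{\Psi} & 0\end{bmatrix}$ with $\widehat{\Psi}$ square and invertible and then computes the norm of $B=(V^{(q)}_0)^{\top}-\widehat{\Psi}^{-1}V_*^{\top}$ directly from $BB^{\top}=\widehat{\Psi}^{-1}\widehat{\Psi}^{-\top}-I$.
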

\begin{proof}
 First, we find an $r \times r$ orthonormal matrix $Q$ such that
    \begin{equation}\label{thm:psidef}
        \Psi Q = (V^{\top}_* V_0) Q = \begin{bmatrix} \widehat{\Psi} & 0_{r-q} \end{bmatrix}, 
    \end{equation}
    where matrix $\widehat{\Psi}$ has size $q \times q$.
    Since the multiplication by the orthogonal matrix $Q$ does not change the projector 
    $$V_0 V^{\top}_0 = (V_0 Q) (V_0 Q)^{\top},$$
    we can always assume that the matrix $\Psi$ is already in the  form \eqref{thm:psidef}.
    Since $U_1$ spans the columns space of $(X_* + H) V_0$, we have
    \begin{equation}\label{thm:singvecteq}
(U_1 U^{\top}_1) (X_* V_0  + H V_0) = X_* V_0 + H V_0.
\end{equation}
From this equation we have
\begin{equation}\label{thm:eq1}
X_* V_0 = U_1 U^{\top}_1 X_* V_0 + U_1 U^{\top}_1 H V_0 - H V_0 = U_* S_* V^{\top}_* V_0 = U_* \begin{bmatrix} \widehat{\Psi} & 0 \end{bmatrix}.
\end{equation}
Introduce the matrix $V^{(q)}_0$ comprised of the first $q$ column of the matrix $V_0$. From \eqref{thm:eq1} we have
$$
U_* S_* \widehat{\Psi} = U_1 U^{\top}_1 X_* V^{(q)}_0 + U_1 U^{\top} H V^{(q)}_0 - H V^{(q)}_0. 
$$
Thus, 
\begin{equation}\label{thm:ustar}
U_* S_*= U_1 \Psi_1  - H V^{(q)}_0 \widehat{\Psi}^{-1},
\end{equation}
Note, that 
$$\Vert P^{\perp} \Vert = \Vert (I - U_1 U^{\top}_1) X_* (I - V_0 V^{\top}_0) \Vert \leq \Vert (I - U_1 U^{\top}_1) X_* 
(I - V^{(q)}_0  V^{(q)}_0)^{\top})) \Vert,$$
and from \eqref{thm:singvecteq} it follows also that 
$$
(I - U_1 U^{\top}_1) (X_* + H) V^{(q)}_0 (V^{(q)}_0)^{\top} = 0.
$$
For simplicity, denote
$$
P^{\perp}_q(X_*) = (I - U_1 U^{\top}_1) X_* 
(I - V^{(q)}_0 (V^{(q)}_0)^{\top}).
$$
Then, 
    \begin{equation}\label{thm:perpproj}
        \begin{split}
            P_q^{\perp}(X_*) & = (I - U_1 U^{\top}_1) X_* - (I  - U_1 U^{\top}_1) X_* V^{(q)}_0 (V^{(q)}_0)^{\top} = \\ &= (I - U_1 U^{\top}_1) X_* + 
            (I - U_1 U^{\top}_1) H V^{(q)}_0 (V^{(q)}_0)^{\top}.
  \end{split}
  \end{equation}
Replacing $U_* S_*$ in \eqref{thm:perpproj} by \eqref{thm:ustar} we get  
\begin{equation}
    \begin{split}
        P^{\perp}_{\mathcal{T}(X)} &= (I - U_1 U_1^{\top}) U_* V^{\top}_* + (I - U_1 U^{\top}_1) H V^{(q)}_0 (V^{(q)}_0)^{\top}) \\
                                   &= (I - U_1 U^{\top}_1) H V^{(q)}_0 (V^{(q)}_0)^{\top}  - (I - U_1 U^{\top}_1) H V^{(q)}_0 \widehat{\Psi}^{-1} V^{\top}_* \\
                                   &= (I - U_1 U^{\top}_1) H V^{(q)}_0 (V^{(q)}_0)^{\top} - \widehat{\Psi}^{-1} V^{\top}_*).
\end{split}
\end{equation}
To estimate the norm, note that
$$
\Vert P^{\perp}_{\mathcal{T}(X)} \Vert \leq \Vert H \Vert \Vert (V^{(q)}_0)^{\top} -  \widehat{\Psi}^{-1} V^{\top}_* \Vert.
$$
Introduce the matrix 
$$
B = (V^{(q)}_0)^{\top} -  \widehat{\Psi}^{-1} V^{\top}_*. 
$$
We have
$$
\Vert X_* ( I - V_0 V^{\top}_0) \Vert =  \Vert (X_* - Y_0) ( I - V_0 V^{\top}_0) \Vert \leq \Vert X_* - Y_0 \Vert.
$$
Replacing $X_*$ by $U_* S_* V^{\top}_*$ we have
$$
\Vert U_* (V^{\top}_* - \Psi V^{\top}_0) \Vert = \Vert U_* (V^{\top}_* - \widehat{\Psi} (V^{(q)}_0)^{\top}) \Vert.
$$
Thus, 
$$\Vert V^{\top}_* -  \widehat{\Psi} (V^{(q)}_0)^{\top} \Vert \leq
\frac{\Vert X_* - Y_0 \Vert}{\sigma_q}.$$
Introduce the matrix $C = V^{\top}_* -  \widehat{\Psi} (V^{(q)}_0)^{\top}$.
Then, 
$$\Vert C \Vert^2 = \Vert C C^{\top} \Vert = \Vert I - \widehat{\Psi} \widehat{\Psi}^{\top} \Vert \leq \frac{\Vert X_* - Y_0 \Vert^2}{\sigma^2_q}.$$
Then, we have
$$\sin \theta \leq \frac{\Vert X_* - Y_0 \Vert}{\sigma_q},$$
whereas we require to bound
$$
\tan \theta = \frac{\sin \theta}{\sqrt{1 - \sin^2 \theta}}.
$$
%
Let $\widehat{\Psi} = U \Lambda V^{\top}$ be the singular value decomposition of $\widehat{\Psi}$. 
From the definition of the angles between subspaces we have
$$
   \Lambda = \cos \angle (V^{\top}_*, V^{(q)}_0) = \cos \angle (V^{\top}_*, V_0),
$$
therefore
$$
\Vert B \Vert^2 = \Vert \cos^{-2} \angle (V^{\top}_*, V_0) - 1\Vert = \Vert \tan^2 \angle  (V^{\top}_*, V_0) \Vert,
$$
which completes the proof.
\end{proof}

\section{Error estimate}
Theorem \ref{thm:normalcomponent} shows that the normal component can
decay as a tangent component squared. Unfortunately, convergence of the
projector splitting method in general is not guaranteed. In section
\ref{prj:counter_example_section} we give the example for which
sequence $Y_k$ converges to a matrix different from $X_*$. In this
section we derive sufficient conditions for convergence of projector splitting method.

We consider one step of the projector splitting scheme.
\begin{lemma}\label{prj:y_lem} Let us denote the initial point $Y_0 =  U_0 S_0 V_0^{\top}$,
  the next step point $  Y_1 = U_1 S_1 V_1^{\top}$ and the fixed point
  $X_* = U_* S_* V_*^{\top}.$ We assume that $S_*$ is a diagonal matrix:
  $$ S_* = \sum_{k=1}^r\limits s_k e_k e_k^{\top},$$
  where $s_k$ is the $k$-singular value and $e_k$  is the
  corresponding vector from the standard
  basis.
Let us denote 
\begin{align*}\label{prj:cos_def}
&\cos^2 \phi_{Li, k} = \Vert U_i U_i^{\top} U_* e_k \Vert_F^2, 
&\cos^2 \phi_{Ri, k} = \Vert e_k^{\top} V_*^{\top} V_i V_i^{\top}
\Vert_F^2,\\
&\sin^2 \phi_{Li, k} = \Vert (I-U_i U_i^{\top}) U_* e_k \Vert_F^2,
&\sin^2 \phi_{Ri, k} = \Vert e_k^{\top} V_*^{\top} (I-V_i V_i^{\top}) \Vert_F^2.
\end{align*}
Assume that

\begin{equation}\label{prj:pq_cond}
\begin{split}
\delta^2 \Vert Y_0 - X_*\Vert_F^2 + \sum_{k=1}^r\limits s_k^2 \sin^2
\phi_{R0, k} \leq s_r .
\end{split}
\end{equation}
Then the next inequality holds:
\begin{equation}\label{prj:y_ineq}
\begin{split}
&\Vert Y_1 - X_* \Vert_F^2 \leq \delta^2 \Vert Y_0 - X_*
\Vert_F^2 +\\
+ \Big(\delta^2 &\Vert Y_0 - X_*
\Vert_F^2 - \sum_{k=1}^r\limits s_k^2 \sin^2 \phi_{R1, k} \Big) 
 \frac{ \sum_{k=1}^r\limits s_k^2\sin^2 \phi_{R0, k}}{ s_r - \sum_{k=1}^r\limits s_k^2\sin^2
\phi_{R0, k} - \sum_{k=1}^r\limits s_k^2 \sin^2 \phi_{R1, k}}.
\end{split}
\end{equation}
\end{lemma}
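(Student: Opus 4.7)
The plan is to combine the error decomposition from Section 3 with Theorem \ref{thm:normalcomponent}, and then translate the abstract estimate $\|P^{\perp}(X_*)\| \leq \|H\| \|\tan\angle(V_0, V_*)\|$ into the weighted quantities $\sum_k s_k^2 \sin^2 \phi_{Ri,k}$ that appear in the statement. The basic skeleton is: write
\begin{equation*}
\|Y_1 - X_*\|_F^2 = \|P_{\mathcal{T}(X)}(\Phi(Y_0) - \Phi(X_*))\|_F^2 + \|P^{\perp}_{\mathcal{T}(X)}(X_*)\|_F^2,
\end{equation*}
bound the tangent part by $\delta^2\|Y_0 - X_*\|_F^2$ using the contraction property together with the fact that $P_{\mathcal{T}(X)}$ is an orthogonal projector, and then invest all the remaining work in the normal part.

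For the normal part I would apply Theorem \ref{thm:normalcomponent} with $H = \Phi(Y_0) - X_*$, where by construction $U_1$ is the left factor in Algorithm \ref{thm:algi} applied to $Y_0$ and $D = \Phi(Y_0) - Y_0$. This gives
\begin{equation*}
\|P^{\perp}_{\mathcal{T}(X)}(X_*)\|_F^2 \leq \|H\|_F^2\cdot \|\tan \angle(V_0, V_*)\|^2.
\end{equation*}
Next I would rewrite the angle factor in terms of the $\phi_{R0,k}$ by noting that $\sum_k s_k^2 \sin^2 \phi_{R0,k} = \|X_*(I - V_0V_0^\top)\|_F^2$ and, symmetrically, $\sum_k s_k^2 \cos^2 \phi_{R0,k} = \|X_* V_0 V_0^\top\|_F^2$. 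After moving through singular values of $\widehat{\Psi}$ as in the proof of Theorem \ref{thm:normalcomponent}, the Frobenius $\tan^2$ becomes a ratio whose numerator is exactly $\sum_k s_k^2 \sin^2 \phi_{R0,k}$ and whose denominator is a weighted $\cos^2$, which I can lower-bound using $s_r$ together with the assumption \eqref{prj:pq_cond} to guarantee positivity.

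The subtle step, and what I expect to be the main obstacle, is getting the factor $\delta^2\|Y_0 - X_*\|_F^2 - \sum_k s_k^2 \sin^2 \phi_{R1,k}$ rather than the crude $\delta^2\|Y_0 - X_*\|_F^2$ in place of $\|H\|_F^2$. This requires exploiting the fact that $V_1$ comes from the third QR step of Algorithm \ref{thm:algi}: one shows that the column space of $\Phi(Y_0)^\top U_1$ equals the column space of $V_1$, which forces $U_1^\top \Phi(Y_0)(I - V_1 V_1^\top) = 0$ and hence allows one to split
\begin{equation*}
\|H\|_F^2 = \|H V_1\|_F^2 + \|H(I - V_1 V_1^\top)\|_F^2,
\end{equation*}
recognizing that $\|X_*(I - V_1V_1^\top)\|_F^2 = \sum_k s_k^2 \sin^2 \phi_{R1,k}$ is absorbed into the second term. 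Only the ``residual'' part $\|H\|_F^2 - \sum_k s_k^2 \sin^2 \phi_{R1,k}$ actually gets amplified by $\tan^2$, giving the refined bound. Once these pieces are in place, I just substitute them into \eqref{thm:errorcontrol} and the inequality \eqref{prj:y_ineq} follows directly; the hypothesis \eqref{prj:pq_cond} is used exactly to keep the denominator in the $\tan^2$ factor bounded away from zero.
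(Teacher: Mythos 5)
Your opening step is fine: the orthogonal decomposition $\Vert Y_1-X_*\Vert_F^2=\Vert P_{\mathcal{T}(X)}(\Phi(Y_0)-X_*)\Vert_F^2+\Vert (I-U_1U_1^\top)X_*(I-V_0V_0^\top)\Vert_F^2$ and the contraction bound for the tangent part match the paper's first inequality (the paper does the same thing in a basis adapted to $U_1,V_0$). The genuine gap is exactly in the step you flag as subtle. From $U_1^\top\Phi(Y_0)(I-V_1V_1^\top)=0$ you only get $U_1^\top H(I-V_1V_1^\top)=-U_1^\top X_*(I-V_1V_1^\top)$, hence $\Vert H\Vert_F^2\geq \Vert HV_1V_1^\top\Vert_F^2+\Vert U_1U_1^\top X_*(I-V_1V_1^\top)\Vert_F^2$; the component $(I-U_1U_1^\top)X_*(I-V_1V_1^\top)$ is \emph{not} controlled by this orthogonality, so you cannot absorb the full $\sum_k s_k^2\sin^2\phi_{R1,k}=\Vert X_*(I-V_1V_1^\top)\Vert_F^2$ into $\Vert H\Vert_F^2$. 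Moreover, Theorem \ref{thm:normalcomponent} produces $P^{\perp}(X_*)=(I-U_1U_1^\top)HV_0^{(q)}\bigl((V_0^{(q)})^\top-\widehat{\Psi}^{-1}V_*^\top\bigr)$, i.e.\ the object that gets amplified is $HV_0$, and your split of $H$ along $V_1$ (with $V_1\neq V_0$) never interacts with it; so even granting the split, the theorem only yields the product bound $\Vert H\Vert\,\Vert\tan\angle(V_0,V_*)\Vert$, not ``amplify only the residual part.''

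This cannot be waved away, because the unrefined estimate your ingredients do give, namely $\Vert Y_1-X_*\Vert_F^2\leq\delta^2\Vert Y_0-X_*\Vert_F^2\bigl(1+\tfrac{A}{s_r^2-A}\bigr)$ with $A=\sum_k s_k^2\sin^2\phi_{R0,k}$, is under hypothesis \eqref{prj:pq_cond} generally \emph{larger} than the right-hand side of \eqref{prj:y_ineq}: the hypothesis forces $\delta^2\Vert Y_0-X_*\Vert_F^2\leq s_r^2-A$, and cross-multiplying shows that then $\delta^2\Vert Y_0-X_*\Vert_F^2\,\tfrac{A}{s_r^2-A}\geq(\delta^2\Vert Y_0-X_*\Vert_F^2-B)\tfrac{A}{s_r^2-A-B}$ whenever $B=\sum_k s_k^2\sin^2\phi_{R1,k}>0$, so it does not imply the lemma. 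The paper does not use Theorem \ref{thm:normalcomponent} here at all: it works blockwise and applies the contraction property to the two mutually orthogonal blocks $U_1U_1^\top(\Phi(Y_0)-X_*)(I-V_1V_1^\top)$ and $(I-U_1U_1^\top)(\Phi(Y_0)-X_*)V_0V_0^\top$, which by the two structural identities of the integrator equal $-U_1U_1^\top X_*(I-V_1V_1^\top)$ and $-(I-U_1U_1^\top)X_*V_0V_0^\top$; this gives $\Vert(I-U_1U_1^\top)X_*V_0V_0^\top\Vert_F^2-\Vert(I-U_1U_1^\top)X_*(I-V_1V_1^\top)\Vert_F^2\leq\delta^2\Vert Y_0-X_*\Vert_F^2-\Vert X_*(I-V_1V_1^\top)\Vert_F^2$, and the leftover $(I-U_1U_1^\top)X_*(I-V_1V_1^\top)$ term is precisely what generates the extra $-\sum_k s_k^2\sin^2\phi_{R1,k}$ in the denominator of \eqref{prj:y_ineq}. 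Your plan never produces that denominator term, which is the visible symptom of the missing argument; to repair it you would need to handle the $(I-U_1U_1^\top)X_*(I-V_1V_1^\top)$ piece explicitly (as the paper does) rather than rely on Theorem \ref{thm:normalcomponent}.
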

\begin{proof}
Without the loss of generality we can assume that

$$
U_1 = \begin{bmatrix} I_r \\      0_{(n-r)\times r}\\ \end{bmatrix}, 
\quad
V_0 = \begin{bmatrix}      I_r \\ 0_{(m-r)\times r}\\\end{bmatrix}.
$$
Then we use the following block representation of $Y_0, \Phi(Y_0), Y_1$ and $X_*$:

\begin{equation*}
\begin{split}
Y_0 &= U_0 S_0 V_0^{\top} =
 \begin{bmatrix}
         D_1^0 & 0 \\
         D_3^0 & 0
       \end{bmatrix}, 
\Phi(Y_0) = 
 \begin{bmatrix}
          D_1^1 & D_2^2 \\
          0 & D_4^1 \\
   \end{bmatrix},\\
Y_1 &= U_1 S_1 V_1^{\top} = 
 \begin{bmatrix}
           D_1^1 & D_2^1 \\
           0 & 0 \\
   \end{bmatrix},
X_* = U_* S_* V_*^{\top} = 
 \begin{bmatrix}
           E_1 & E_2 \\
           E_3 & E_4 \\
   \end{bmatrix}.
\end{split}
\end{equation*}
Therefore,

\begin{equation*}
\begin{split}
\Vert Y_1 - X_* \Vert_F^2 &= \Vert D_1^1 - E_1 \Vert_F^2 + \Vert D_2^1
- E_2 \Vert_F^2 + \Vert E_3 \Vert_F^2 + \Vert E_4 \Vert_F^2 \leq\\
&\leq \left(\Vert D_1^1 - E_1 \Vert_F^2 + \Vert D_2^1
- E_2 \Vert_F^2 + \Vert E_3 \Vert_F^2 + \Vert D_4^1 - E_4
\Vert_F^2\right) + \left( \Vert E_4 \Vert_F^2\right) =\\
&=\Vert \Phi(Y_0) - X_* \Vert_F^2 + \Vert (I - U_1 U_1^{\top})  X_* (I - V_0 V_0^{\top}) \Vert_F^2 \leq \\
&\leq \delta^2 \Vert Y_0 - X_*
\Vert_F^2 + \Vert (I - U_1 U_1^{\top})  X_* (I - V_0 V_0^{\top})\Vert_F^2.
\end{split}
\end{equation*}
We want to estimate $ \Vert (I - U_1 U_1)  X_* (I - V_0
V_0)\Vert_F^2$. For that purpose we exploit contraction property of
$\Phi$:

\begin{equation*}\label{prj:y_left_ineq}
\begin{split}
\Vert U_1 U_1^{\top} ( \Phi(Y_0) - X_*)\Vert_F^2 &+ \Vert (I - U_1U_1^{\top} ) ( \Phi(Y_0) - X_*)\Vert_F^2 =\\
&= \Vert( \Phi(Y_0) - X_*)\Vert_F^2 \leq \delta^2 \Vert Y_0 - X_* \Vert_F^2, \\
\Vert U_1 U_1^{\top} ( X_*) (I - V_1 V_1^{\top})\Vert_F^2 &+ 
\Vert (I - U_1U_1^{\top} ) ( X_*)V_0 V_0^{\top}\Vert_F^2 \leq\delta^2 \Vert Y_0 - X_* \Vert_F^2,\\
\Vert (I - U_1U_1^{\top} ) ( X_*)  V_0 V_0^{\top}\Vert_F^2
&-\Vert (I - U_1 U_1^{\top}) ( X_*) (I - V_1 V_1^{\top})\Vert_F^2
 \leq \\
&\leq \delta^2 \Vert
Y_0 - X_* \Vert_F^2 -  \Vert ( X_*) (I - V_1 V_1^{\top})\Vert_F^2.
\end{split}
\end{equation*}
Then the inequality \eqref{prj:y_left_ineq} transforms to

\begin{equation}\label{prj:left_ineq1}
\begin{split}
&\sum_{k=1}^r s_k^2 \Vert (I - U_1 U_1^{\top} ) U_* e_1 \Vert_F^2
\Vert e_k^{\top} V_0 V_0^{\top}\Vert_F^2 -\\
&- \sum_{k=1}^r s_k^2  \Vert (I - U_1 U_1^{\top})U_* e_k\Vert \Vert
e_k^{\top} V_*^{\top} (I - V_1 V_1^{\top})\Vert_F^2
 \leq \\
&\leq \delta^2 \Vert
Y_0 - X_* \Vert_F^2 -  \sum_{k=1}^r s_k^2 \Vert U_* e_k \Vert_F^2
\Vert e_k^{\top} V_*^{\top} (I - V_1 V_1^{\top})\Vert_F^2.
\end{split}
\end{equation}
Using \eqref{prj:cos_def} 
we have

$$ 
\sum_{k = 1}^r\limits \sin^2 \phi_{L1, k} s_k^2 (\cos^2
\phi_{R0, k} - \sin^2 \phi_{R1, k}) \leq \delta^2 \Vert Y_0 - X_*
\Vert_F^2 - \sum_{k=1}^r\limits s_k^2 \sin^2 \phi_{R1, k}.
$$
Inequality \eqref{prj:pq_cond} guarantees that 
\begin{equation*}
\begin{split}\sum_{k=1}^r\limits s_k^2\sin^2
\phi_{R0, k} - \sum_{k=1}^r\limits s_k^2 \sin^2 \phi_{R1, k} < s_r^2, \\
0 < \max_{1\leq k\leq r} \left(\cos^2
\phi_{R0, k} - \sin^2 \phi_{R1, k}\right).\\
\end{split}
\end{equation*}
Therefore

\begin{equation*}
\begin{split}
&\sum_{k = 1}^r\limits s_k^2 \sin^2 \phi_{L1, k} \sin^2\phi_{R0, k}  \leq \\
\leq \Big(\delta^2 \Vert Y_0 - X_*
\Vert_F^2 - &\sum_{k=1}^r\limits s_k^2 \sin^2 \phi_{R1, k} \Big) 
\max_{1\leq k\leq r} \frac{\sin^2 \phi_{R0, k}}{\cos^2
\phi_{R0, k} - \sin^2 \phi_{R1, k}} \leq \\
\leq \Big(\delta^2 \Vert Y_0 - X_*
\Vert_F^2 - &\sum_{k=1}^r\limits s_k^2 \sin^2 \phi_{R1, k} \Big) 
 \frac{ \sum_{k=1}^r\limits s_k^2\sin^2 \phi_{R0, k}}{ s_r^2 - \sum_{k=1}^r\limits s_k^2\sin^2
\phi_{R0, k} - \sum_{k=1}^r\limits s_k^2 \sin^2 \phi_{R1, k}}.
\end{split}
\end{equation*}
i.e. \eqref{prj:y_ineq} is proven.
\end{proof}\\
For convenience we introduce new variables:

\begin{equation}\label{prj:pqs}
\begin{split}
s = \delta^2, \quad p_k = \dfrac{\Vert Y_k - X_* \Vert^2_F}{s_r^2},
\quad q_k =  \dfrac{1}{s_r^2}\sum_{k=1}^r\limits s_k^2 \sin^2 \phi_{Rk},
\end{split}
\end{equation}
Now we can formulate the connection between the subsequent steps:

\begin{equation}\label{prj:pqs_ineq}
\begin{split}
p_{k+1} \leq s p_k + \frac{(s p_k- q_{k+1})q_k}{1 - q_k - q_{k+1}},
\quad 0 \leq q_{k+1} \leq s p_k. 
\end{split}
\end{equation}
We can derive upper estimate for $p_k$:

\begin{theorem}
Assume that $0<s<1$, $0 \leq q_0 \leq 1$,
  $0<p_0$. \\
Consider $p_k , q_k , k\in \mathbf{N}$ that satisfy \eqref{prj:pqs}. 
Assume that $4 \dfrac{p_0}{(1-q_0)^2}\dfrac{s}{1-s}<1$. Then the next inequalities hold:
\begin{equation}\label{prj:thm_spq}
\begin{split}
p_k &\leq\frac{p_0}{c_*(s, p_0, q_0)} s^k,\quad 0 < c_*(s, p_0, q_0)
\leq 1 - \sum_{j=0}^k \limits q_k \leq s p_{k-1} + q_{k-1},
\end{split}
\end{equation}
where
 $$ c_*(s, p_0, q_0) = \frac{p_0}{1-q_0}
\frac{s}{1-s} \left(\dfrac{2}{1 + \sqrt{1 - 4
    \frac{p_0}{(1-q_0)^2}\frac{s}{1-s}}} \right). $$

\end{theorem}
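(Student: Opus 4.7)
The plan is to prove $p_k \leq (p_0/c_*) s^k$ by strong induction on $k$, exploiting the algebraic identity that $c_*$ is the smaller positive root of the quadratic
$$c^2 - (1-q_0)\,c + \frac{p_0 s}{1-s} = 0,$$
equivalently $c_*(1-q_0-c_*) = p_0 s/(1-s)$. The discriminant hypothesis $4 p_0 s/((1-q_0)^2(1-s)) < 1$ guarantees $c_*$ is real and $0 < c_* < 1-q_0 \leq 1$.

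The key technical step is a \emph{monotone} one-step bound that removes $q_{k+1}$ from the right-hand side of the recurrence:
$$p_{k+1} \leq \frac{s p_k}{1-q_k}.$$
Clearing denominators in \eqref{prj:pqs_ineq} reduces this to $q_{k+1}(sp_k + q_k - 1) \leq 0$, so it holds whenever $sp_k + q_k \leq 1$. Iterating telescopes to
$$p_k \leq \frac{s^k p_0}{\prod_{j=0}^{k-1}(1-q_j)} \leq \frac{s^k p_0}{1 - \sum_{j=0}^{k-1} q_j},$$
using the elementary inequality $\prod(1-q_j) \geq 1 - \sum q_j$. Thus the proof reduces to controlling partial sums of the $q_j$.

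I close the induction by carrying the joint hypothesis that for every $j \leq k$, both $p_j \leq (p_0/c_*) s^j$ and $sp_j + q_j \leq 1$. The base case $k=0$ follows from $c_*<1$ and from $p_0 s \leq c_*(1-q_0)(1-s) \leq 1-q_0$, a direct consequence of the quadratic identity. For the inductive step, the constraint $q_{j+1} \leq sp_j$ combined with the first hypothesis gives $q_j \leq (p_0/c_*) s^j$ for $1 \leq j \leq k+1$, so by a second use of the identity,
$$\sum_{j=0}^{k+1} q_j \leq q_0 + \frac{p_0}{c_*} \frac{s}{1-s} = q_0 + (1-q_0-c_*) = 1 - c_*.$$
This establishes the stated inequality $1 - \sum_{j=0}^k q_j \geq c_*$, and substituting back into the telescoped bound yields $p_{k+1} \leq (p_0/c_*) s^{k+1}$. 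The second part of the hypothesis at step $k+1$ follows from the algebraic reduction $(p_0/c_*)s(1+s) = (1-s^2)(1-q_0-c_*) \leq 1$. The main obstacle is recognizing the monotone bound $p_{k+1} \leq sp_k/(1-q_k)$ concealed in the two-step recurrence; once identified, the quadratic identity for $c_*$---engineered precisely so that $(p_0/c_*)\cdot s/(1-s) = 1 - q_0 - c_*$---closes the induction with essentially no room to spare.
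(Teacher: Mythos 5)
Your proof is correct, and it follows the same overall strategy as the paper: induction on $k$, a one-step bound that decouples $q_{k+1}$ from the recurrence \eqref{prj:pqs_ineq}, telescoping, the product--sum inequality, and control of $\sum_j q_j$ by the geometric series together with the defining quadratic $c_*(1-q_0-c_*)=p_0\,\tfrac{s}{1-s}$. Where you differ is in the key one-step lemma, and the difference is a genuine improvement: the paper's printed bound $p_{k+1}\le sp_k\big/\big(1-\tfrac{q_{k+1}}{1-q_k}\big)$ does not actually follow from \eqref{prj:pqs_ineq} as stated (take $q_{k+1}=0$, $q_k=1/2$, $sp_k=0.2$: the recurrence allows $p_{k+1}=0.4$ while the paper's bound claims $0.2$; the algebra in the paper's display has $q_k$ and $q_{k+1}$ interchanged), whereas your bound $p_{k+1}\le sp_k/(1-q_k)$ is exactly what the recurrence yields, conditional on $sp_k+q_k\le 1$. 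Carrying that auxiliary condition explicitly through the induction (verified at $k=0$ from the quadratic identity, and propagated via $(p_0/c_*)s(1+s)=(1-s^2)(1-q_0-c_*)\le 1$) is precisely the bookkeeping the paper leaves implicit, and you also state the Weierstrass inequality $\prod_j(1-q_j)\ge 1-\sum_j q_j$ with the correct orientation, where the paper's $\le$ is a typo. Two minor remarks: like the paper, you implicitly assume $1-q_k-q_{k+1}>0$ so the recurrence is well defined (your carried hypothesis in fact gives this strictly); and you do not address the final inequality $1-\sum_{j=0}^k q_j\le sp_{k-1}+q_{k-1}$ in \eqref{prj:thm_spq}, but that claim is garbled as printed (it fails already when all $q_j=0$ and $p_{k-1}$ is small) and the paper's own concluding chain does not establish it either, so no fault attaches to you there; what you do prove, $c_*\le 1-\sum_{j=0}^{k}q_j$, is the meaningful part.
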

\begin{proof}
The parameter $c_* (s, p_0, q_0)$ is the
positive solution of the equation: 
$$c_* (s, p_0, q_0) = 1 - q_0 - p_0 \frac{s}{1-s}
\frac{1}{c_* (s, p_0, q_0)}.$$

We will use mathematical induction to prove \eqref{prj:thm_spq}. The
base case follows from $0 <
c_* (s, p_0, q_0) < 1 $
$$
p_0 \leq \frac{p_0}{c_* (s, p_0, q_0)}, \quad c_* (s, p_0, q_0)
\leq 1 - q_0.
$$

Consider the inductive step. Assume that \eqref{prj:thm_spq}
holds for every $i<k$ for some k. Then, 

\begin{equation}
\begin{split}
p_{k+1} &\leq s p_k + \frac{(s p_k- q_{k+1})q_k}{1 - q_k - q_{k+1}} = \\
&= s p_k \frac{1 - q_k}{1 - q_k - q_{k+1}} - \frac{ q_{k+1}q_k}{1 - q_k  - q_{k+1}}  \leq s \frac{p_k}{1 - \frac{q_{k+1}}{1 - q_k}}.
\end{split}
\end{equation}
We can expect that the term $\dfrac{ q_{k+1}q_k}{1 - q_k - q_{k+1}}$ is
sufficiently smaller than the $p_{k+1}$ and decays as $p_{k+1}^2$ due
to $q_k\sim p_k$. Finally,

\begin{equation}\label{prj:pqs_ineq2}
\begin{split}
p_{k+1} &\leq 
\dfrac{sp_k}{1 - \left(\dfrac{q_{k+1}}{1 - q_k}\right) } \leq  \dfrac{s^{k+1} p_0}{\prod\limits_{j = 0}^k \Big( 1 - \dfrac{q_{j+1}}{1 - q_j} \Big)}.
\end{split}
\end{equation}
It is easy to prove that in the case $\sum_{j = 0}^k\limits q_j < 1$ we have 

$$
\prod\limits_{j = 0}^k \Big( 1 - \frac{q_{j+1}}{1 - q_j} \Big)
\leq 1 - \sum_{j = 0}^{k+1} q_k.
$$
It leads to

\begin{equation*}
\begin{split}
p_{k+1} &\leq \dfrac{s^{k+1} p_{0}}{1 - \sum_{j = 0}^k\limits
    q_j }
\leq \dfrac{s^{k+1} p_{0}}{c_* (s, p_0, q_0)},\\
\end{split}
\end{equation*}
therefore

\begin{equation*}
\begin{split}
c_*(s, p_0, q_0) &= 1 - q_0 -  \dfrac{p_0}{c_*(s, p_0, q_0)}
\frac{s}{1-s} = 1 - q_0 - s\sum_{k=0}^{\infty}\limits \dfrac{p_0}{c_*(s, p_0, q_0)}
s^i \leq \\
&\leq 1 - q_0 - s \sum_{j=0}^{k+1}\limits p_j \leq 1 -
\sum_{j=0}^{k+1}\limits q_j \leq 1 - q_k - s p_k.\\
\end{split}
\end{equation*}
The inductive step is proven.
\end{proof}

The final estimate is

\begin{equation*}
\begin{split}
 p_n \leq \dfrac{ p_{0} }{ c_*(s,
  p_0, q_0) } s^{n} = \dfrac{ p_{0} }{ 1 - q_0 } s^{n} \left(\dfrac{1 + \sqrt{1 - 4
    \dfrac{p_0}{(1-q_0)^2}\dfrac{s}{1-s}}}{2\dfrac{p_0}{(1-q_0)^2}\dfrac{s}{1-s}}\right).
\end{split}
\end{equation*}

Note that if the condition $4 \dfrac{p_0}{(1-q_0)^2}\dfrac{s}{1-s}<1$
does hold, then the condition $s p_0 + q_0< 1$ does hold as well.

\begin{corollary}
Define $Y_k$ as in \eqref{thm:projit}, $X_*$, $s_k$ and $\sin^2 \phi_{R0, k}$ as in Lemma \ref{prj:y_lem}.
Assume that the next inequality holds
$$
 4 \dfrac{\Vert Y_0 - X_*\Vert}{\left(s_r^2 - \sum_{k=1}^r\limits s_k^2 \sin^2 \phi_{R0,
       k}\right)^2} < 1.
$$

Then the sequence $Y_k$ converges to $X_*$ and the following
inequality holds
$$
\Vert Y_k - X_*\Vert < c(\delta, Y_0, X_*) \Vert Y_0 - X_*\Vert \delta^k, 
$$
where
$$
c(\delta, Y_0, X_*) = \dfrac{1 + \sqrt{1 - 4
    \dfrac{\delta^2\Vert Y_0 - X_*\Vert^2 }{(1-\delta^2)\left(s_r^2-\sum_{k=1}^r\limits s_k^2 \sin^2 \phi_{R0,
       k}\right)^2}}}{2 \dfrac{\delta^2\Vert Y_0 - X_*\Vert^2 }{(1-\delta^2)\left(s_r^2-\sum_{k=1}^r\limits s_k^2 \sin^2 \phi_{R0,
       k}\right)^2}}.
$$
\end{corollary}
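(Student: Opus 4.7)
The plan is to deduce the Corollary directly from the preceding Theorem by translating its abstract sequences back to the geometric iteration. First I would apply Lemma~\ref{prj:y_lem} along the iterates $Y_0, Y_1, \ldots$ and rewrite it in the reparametrised variables
\[
s = \delta^2, \qquad p_k = \frac{\Vert Y_k - X_*\Vert_F^2}{s_r^2}, \qquad q_k = \frac{1}{s_r^2}\sum_{j=1}^r s_j^2 \sin^2 \phi_{Rk, j}
\]
of \eqref{prj:pqs}. With this substitution the conclusion \eqref{prj:y_ineq} of the Lemma becomes exactly the abstract recurrence \eqref{prj:pqs_ineq} for the nonnegative sequences $(p_k)$ and $(q_k)$.

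Next I would check that the Corollary's smallness condition translates to the Theorem's hypothesis $4 \frac{p_0}{(1-q_0)^2}\frac{s}{1-s} < 1$, and in particular that the single-step applicability condition \eqref{prj:pq_cond} of Lemma~\ref{prj:y_lem} is met at step $k=0$. The preceding Theorem then yields
\[
p_k \leq \frac{p_0}{c_*(s, p_0, q_0)} s^k \qquad \text{for every } k \geq 0.
\]
Crucially, the side estimate in \eqref{prj:thm_spq}, namely $c_*(s,p_0,q_0) \leq 1 - q_k - sp_k$, keeps the iterates inside the regime where Lemma~\ref{prj:y_lem} applies, so the induction closes and the bound \eqref{prj:y_ineq} can be re-invoked at every step without additional assumptions.

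Finally I would translate the bound back to the original quantities. Taking square roots gives
\[
\Vert Y_k - X_*\Vert_F \leq \frac{\Vert Y_0 - X_*\Vert_F}{\sqrt{c_*(\delta^2, p_0, q_0)}}\,\delta^k,
\]
and a routine rearrangement of the closed form
\[
c_*(s, p_0, q_0) = \frac{p_0}{1-q_0}\frac{s}{1-s} \cdot \frac{2}{1+\sqrt{1 - 4\frac{p_0}{(1-q_0)^2}\frac{s}{1-s}}}
\]
reproduces the factor $c(\delta, Y_0, X_*)$ stated in the Corollary, once $p_0$ and $1-q_0$ are written out as $\Vert Y_0 - X_*\Vert_F^2/s_r^2$ and $(s_r^2 - \sum_j s_j^2 \sin^2 \phi_{R0,j})/s_r^2$ respectively.

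The only real subtlety is the inductive bookkeeping: the Corollary places a condition only on the initial iterate, so one must appeal to the quantitative lower bound $c_*(s,p_0,q_0) > 0$ from the Theorem in order to propagate the Lemma's hypothesis $sp_k + q_k \leq 1$ along the entire sequence. Beyond that, the Corollary is a direct transcription of \eqref{prj:thm_spq} and no new analytic input is needed.
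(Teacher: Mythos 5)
Your proposal follows essentially the same route as the paper, which gives no separate proof of the corollary: it is obtained exactly as you describe, by iterating Lemma \ref{prj:y_lem}, passing to the variables \eqref{prj:pqs}, invoking the preceding theorem (whose inductive bound $c_*(s,p_0,q_0)\leq 1-q_k-sp_k$ indeed keeps \eqref{prj:pq_cond} valid along the sequence), and translating back. The one caveat is that taking square roots literally gives the constant $1/\sqrt{c_*}=\sqrt{c(\delta,Y_0,X_*)/(1-q_0)}$ rather than $c(\delta,Y_0,X_*)$ itself, but this bookkeeping imprecision (like the stray $s_r^2$ factor and missing square in the corollary's smallness condition) is already present in the paper's own statement and is not a defect introduced by your argument.
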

This estimate guarantees if the initial point is close enough to the fixed
point then the projector
splitting method in the worst case has the same convergence rate as
the fixed-point iteration method. Also the estimate requires that the
distance between the initial point and the fixed point is less than
the smallest singular value of the fixed point $s_r$. In the next
section we give the example for which this condition do not hold and
the projector splitting method does not converges to the true solution.

\section{Counter-example}\label{prj:counter_example_section}
Consider the case $n=2,$ $r=1$. 
We will need the following auxiliary result:

\begin{lemma}\label{prj:f_lemma}
Let the mapping $\Phi:
\mathbb{R}^{2\times 2} \to \mathbb{R}^{2\times 2}$ be defined as

\begin{equation}\label{prj:phidef}
\begin{split}
\Phi(Y) &= X_* + \delta \Vert Y - X_* \Vert_F X_{\perp}, \\
 X_* &= \begin{pmatrix} 1 & 0 \\ 0 & 0\end{pmatrix}, X_{\perp} =  \begin{pmatrix} 0 & 0 \\ 0 & 1\end{pmatrix}.\\
\end{split}
\end{equation}
Let us consider $\delta, d_*, q_{max}$ and $s$ that satisfy
\begin{equation}
\begin{split}
 0 &< \delta < 1, \quad 1< \delta^2 + \delta^6, \quad d_* = \frac{1}{\sqrt{1-\delta^2}}, \\
 0 &< q_{max}, \quad 0<s, \quad \frac{1}{\delta^4 d_*^2} \Big( 1
 +\frac{q_{max}}{\delta^2 d_*^2 } \Big) \leq \delta^2 - \frac{s}{\delta^2 d_*^2}.\\
\end{split}
\end{equation}
Denote the set 
$$\Omega = \left\{ \{p,  q\} \Big| 0\leq p,\quad 0\leq  q \leq q_{max},\quad \frac{q}{p} \leq s\right\} $$
and the function 

\begin{equation*}
\begin{split}
f: \Omega \to \mathbb{R}_{0, +}^2,\quad f(\{ p,
  q \}) = \left\{ \dfrac{1 + \delta^2 p}{1 +
    \dfrac{q}{\delta^2 d_*^2 (1 + p)}  }-1,\quad  \dfrac{q}{\delta^4
    d_*^2 (1 + p)} \right\},
\end{split}
\end{equation*}
\end{lemma}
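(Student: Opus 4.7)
The lemma's substantive content, given the signature $f:\Omega\to\mathbb{R}_{0,+}^2$ and the role this function will evidently play in the counter-example iteration, is that $f$ is in fact a self-map of $\Omega$: for every $(p,q)\in\Omega$, the image $(p',q'):=f(p,q)$ satisfies $p'\ge 0$, $0\le q'\le q_{max}$, and $q'/p'\le s$. My plan is to verify each of these three inequalities in turn using the stated hypotheses on $\delta, d_*, q_{max}, s$.

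First, I will rewrite $p'$ in the consolidated form
$$p' \;=\; \frac{\delta^4 d_*^2\,p(1+p) - q}{\delta^2 d_*^2(1+p) + q},$$
so that $p'\ge 0$ is equivalent to $\delta^4 d_*^2\,p(1+p)\ge q$. Because $(p,q)\in\Omega$ gives $q\le sp$, it suffices to show $\delta^4 d_*^2(1+p)\ge s$, and rearranging the main hypothesis yields $\delta^4 d_*^2 \ge s + \delta^{-2}(1+q_{max}/(\delta^2 d_*^2))$, which strictly exceeds $s$. For the bound $q'\le q_{max}$, I write $q' = q/(\delta^4 d_*^2(1+p)) \le q_{max}/(\delta^4 d_*^2)$, and the hypothesis $1<\delta^2+\delta^6$ combined with $d_*^2=1/(1-\delta^2)$ gives $\delta^6 d_*^2 > 1$, hence $\delta^4 d_*^2 > 1/\delta^2 > 1$, so the bound is obtained.

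The main obstacle is verifying $q'/p'\le s$. Using the closed-form expressions above,
$$\frac{q'}{p'} \;=\; \frac{q\bigl(\delta^2 d_*^2(1+p) + q\bigr)}{\delta^4 d_*^2(1+p)\bigl(\delta^4 d_*^2\,p(1+p) - q\bigr)},$$
and the required bound reduces, after clearing denominators, to a polynomial estimate in $p$ and $q$. My strategy will be to control the numerator via $q\le q_{max}$ and the denominator via $q\le sp$, reducing the task to a single scalar inequality that I expect to follow exactly from the tight hypothesis $(\delta^4 d_*^2)^{-1}\bigl(1+q_{max}/(\delta^2 d_*^2)\bigr) \le \delta^2 - s/(\delta^2 d_*^2)$. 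I anticipate that this hypothesis is calibrated so the inequality is sharp in the boundary limit $p\to 0$, $q/p\to s$, $q\to q_{max}$, which is precisely what makes the algebra delicate; this final estimate is where essentially all the technical work sits.
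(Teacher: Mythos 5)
Your proposal addresses only part of what the lemma asserts, and it does not actually finish the part it does address. In the paper the conclusion, stated immediately after the definition of $f$, is twofold: $f(\Omega)\subset\Omega$ \emph{and} $\lim_{n\to\infty}f^{*n}(x)=\{0,0\}$ for every $x\in\Omega$; the second assertion is precisely what Theorem~\ref{prj:counter_thm} uses to conclude that the projector-splitting iterates converge to the spurious point $d_*X_{\perp}$. You never discuss the convergence of the iterates at all. It is in fact the easy half: from the explicit formulas one has $p'\le\delta^2 p$ and $q'\le q/(\delta^4 d_*^2)$, with $\delta^2<1$ and $\delta^4 d_*^2>1$, so both coordinates decay linearly — but it must be said.

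Within the invariance part, your verifications of $p'\ge 0$ (via the consolidated formula $p'=\bigl(\delta^4 d_*^2\,p(1+p)-q\bigr)/\bigl(\delta^2 d_*^2(1+p)+q\bigr)$ and $\delta^4 d_*^2>s$) and of $q'\le q_{max}$ (via $\delta^4 d_*^2>1$, which follows from $1<\delta^2+\delta^6$) are correct and agree with the paper. However, for the decisive constraint $q'/p'\le s$ you only display the ratio and then describe what you ``expect'' and ``anticipate'' the hypothesis to give after clearing denominators; this is a plan, not a proof, and it is exactly the step where the hypothesis $\tfrac{1}{\delta^4 d_*^2}\bigl(1+\tfrac{q_{max}}{\delta^2 d_*^2}\bigr)\le\delta^2-\tfrac{s}{\delta^2 d_*^2}$ must enter, so leaving it open is a genuine gap. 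For the record, your route can be completed: bounding the leading factor $q$ in the numerator by $sp$, the other numerator factor by $\delta^2 d_*^2(1+p)+q_{max}$, and the denominator factor by $p\bigl(\delta^4 d_*^2(1+p)-s\bigr)$ reduces the claim to $\delta^2 d_*^2(1+p)+q_{max}\le\delta^4 d_*^2(1+p)\bigl(\delta^4 d_*^2(1+p)-s\bigr)$, which at $p=0$ is the hypothesis multiplied by $\delta^6 d_*^4$ and whose two sides only separate further as $p$ grows. The paper avoids this polynomial manipulation with a lighter argument worth comparing to yours: it shows $q'/q\le 1/(\delta^4 d_*^2)$ and, using the hypothesis, $p'/p\ge\bigl(\delta^2-\tfrac{s}{\delta^2 d_*^2}\bigr)/\bigl(1+\tfrac{q_{max}}{\delta^2 d_*^2}\bigr)\ge 1/(\delta^4 d_*^2)$, whence $q'/p'\le q/p\le s$ immediately, and the same two ratio bounds deliver the convergence statement you omitted.
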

and
$$
f^{*n}(x) = \underbrace{f(\ldots f(x) \ldots )}_{n \text{ times}}.
$$
Then 
$$
f(\Omega) \subset \Omega, \quad 
\forall x\in\Omega,\quad\lim_{n\to\infty}\limits f^{*n}(x) = \{0,  0\}.
$$

\begin{proof}
It is important to note that $1 < \delta^4 d_*^2 = \dfrac{\delta^4}{1 -
  \delta^2}$ because of the choice of $\delta$\eqref{prj:f_lemma}.
Let us denote $f(\{ p,
  q \}) = \{p_1,  q_1 \}$. Then 

\begin{equation}\label{prj:ineq_qmax}
\begin{split}
q_1 = \frac{q}{\delta^4 d_*^2 (1 + p)} \leq \frac{q}{\delta^4
    d_*^2} < q \leq q_{max},
\end{split}
\end{equation}
and therefore

\begin{equation*}
\begin{split}
\frac{q_1}{q} &= \frac{1}{\delta^4 d_*^2 (1 + p)} \leq \frac{1}{\delta^4 d_*^2},\\
\frac{p_1}{p} &= \frac{1}{p}\Big(  d\frac{1 + \delta^2 p}{1 +
    \dfrac{q}{\delta^2 d_*^2 (1 + p)}  }-1\Big) \geq 
\frac{1}{p}\Big(  \dfrac{1 + \delta^2 p}{1 +
    \dfrac{q}{\delta^2 d_*^2}  }-1\Big) =\\
&=  \Big(\delta^2 - \frac{1}{\delta^2 d_*^2}\frac{q}{p}\Big) / \Big(1 +
    \frac{q}{\delta^2 d_*^2}  \Big) \geq \Big(\delta^2 -
    \frac{s}{\delta^2 d_*^2}\Big) / \Big(1 +
    \frac{q_{max}}{\delta^2 d_*^2} \Big) \geq \frac{1}{\delta^4
      d_*^2}.\\
\end{split}
\end{equation*}
Finally we have 

\begin{equation}\label{prj:ineq_pq}
\begin{split}
\frac{q_1}{p_1} \leq \dfrac{ q / \delta^4
      d_*^2}{ p / \delta^4 d_*^2} = \frac{q}{p}\leq s.
\end{split}
\end{equation}
The statement $f(\Omega) \subset \Omega$ follows from
\eqref{prj:ineq_qmax} and \eqref{prj:ineq_pq}. Also the following
inequalities hold

\begin{equation*}\label{prj:conv_pq}
\begin{split}
\frac{p_1}{p} = \frac{1}{p}\left(  \dfrac{1 + \delta^2 p}{1 +
    \dfrac{q}{\delta^2 d_*^2 (1 + p)}  }-1\right) \leq \delta^2,\quad \frac{q_1}{q} \leq \frac{1}{\delta^4 d_*^2}.
\end{split}
\end{equation*}
The inequalities \eqref{prj:conv_pq} guarantee linear convergence of  $ f^{*n}(x)$ to $ \{ 0,  0\}$ for
every $x \in \Omega.$
\end{proof}
 
\begin{lemma}\label{prj:pi_lemma}
Let contraction mapping $\Phi$ is defined as in lemma \ref{prj:f_lemma}.
Let us consider parameters $\delta, d_*$, contraction mapping $\Phi$ and the
set $\Omega$ and the function $f$ that satisfy condition of lemma \ref{prj:f_lemma}.
Let us denote  the set of rank-$1$ $2\times2$ real matrices
 $M_{2,1}(\mathbf{R})$, 
 $\phi_R(X)$ - right angle for rank-$1$ $2\times2$ matrix $X$ and
\begin{equation}
\begin{split} 
&\mathcal{M}_{2,1}^{'} = \Big( X | X \in M_{2,1}(\mathbf{R}), \sin^2(\phi_R(X)) > 0\Big),\\
&\pi : \mathcal{M}_{2,1}^{'} \to \mathbf{R}_{0, +}^2,\quad \pi(X)= \left\{
  \dfrac{\Vert X - X_*\Vert_F^2}{d_*^2} - 1, \quad \ctg^2 \phi_R(X) \right\}.\\
\end{split}
\end{equation}
Assume that 
\begin{equation}
\begin{split}
Y_0 &=  \begin{pmatrix} \cos \phi_{L0} & \sin
  \phi_{L0}\end{pmatrix} s_0 \begin{pmatrix} \cos \phi_{R0} \\ \sin
  \phi_{R0} \end{pmatrix} \in \pi^{-1}(\Omega), \\
Y_1 &= I(Y_0, \Phi(Y_0) - Y_0) = \begin{pmatrix} \cos \phi_{L1} & \sin
  \phi_{L1}\end{pmatrix} s_0 \begin{pmatrix} \cos \phi_{R1} \\ \sin
  \phi_{R1} \end{pmatrix}.
\end{split}
\end{equation}
Then the following equalities hold
\begin{equation}\label{prj:commuteq}
\begin{split}
\pi (Y_1) = f(\pi(Y_0)),\quad
\ctg^2 \phi_{L1} <\ctg^2 \phi_{R1}. 
\end{split}
\end{equation}

\end{lemma}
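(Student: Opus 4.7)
The plan is to run Algorithm~\ref{thm:algi} explicitly, using the fact that $\Phi(Y_0)=\mathrm{diag}(1,\delta N_0)$ with $N_0:=\Vert Y_0-X_*\Vert_F$, and then to translate the resulting SVD of $Y_1$ into the coordinates $(p,q)$ supplied by $\pi$.

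First I would observe that $U_0 S_0+DV_0=(Y_0+D)V_0=\Phi(Y_0)V_0$, so the first QR in the algorithm merely normalizes $\Phi(Y_0)v_0$. Writing $v_0=(\cos\phi_{R0},\sin\phi_{R0})^\top$ gives $U_1\propto(\cos\phi_{R0},\,\delta N_0\sin\phi_{R0})^\top$, and hence $\ctg\phi_{L1}=\ctg\phi_{R0}/(\delta N_0)$. A direct cancellation shows $S''=U_1^\top U_0 S_0$, so the second QR reduces to $V_1 S_1^\top=\Phi(Y_0)^\top U_1$; the diagonal form of $\Phi(Y_0)$ then produces $V_1\propto(\cos\phi_{R0},\,\delta^2 N_0^2\sin\phi_{R0})^\top$ and $\ctg\phi_{R1}=\ctg\phi_{R0}/(\delta^2 N_0^2)$.

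From these two formulas the ratio $\ctg^2\phi_{L1}/\ctg^2\phi_{R1}=\delta^2 N_0^2$ is immediate. The condition $\delta^2+\delta^6>1$ in Lemma~\ref{prj:f_lemma} implies $\delta^2 d_*^2>1$, and $\pi(Y_0)\in\Omega$ forces $N_0^2=d_*^2(1+p)\ge d_*^2$; combining these yields $\delta^2 N_0^2>1$, which gives the cotangent inequality in \eqref{prj:commuteq}. The second coordinate of $\pi(Y_1)=f(\pi(Y_0))$ follows directly by substituting $N_0^2=d_*^2(1+p)$ and $\ctg^2\phi_{R0}=q$ into the explicit expression for $\ctg^2\phi_{R1}$.

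For the first coordinate I would expand $\Vert Y_1-X_*\Vert_F^2=S_1^2-2S_1\cos\phi_{L1}\cos\phi_{R1}+1$ from the SVD $Y_1=S_1U_1V_1^\top$ and substitute the vectors above. Setting $t=\delta^2 N_0^2$ and using $\cos^2\phi_{R0}=q/(q+1)$, $\sin^2\phi_{R0}=1/(q+1)$, a short computation reduces this to $N_1^2=t(1+t)/(q+t)$. The identity $d_*^2(1-\delta^2)=1$ then triggers the cancellation that collapses $N_1^2/d_*^2-1$ into $f_1(p,q)$. I expect the main obstacle to be this final algebraic reduction: the collapse to $f_1(p,q)$ depends on invoking the identity $d_*^2(1-\delta^2)=1$ at exactly the right step, and without it the expressions do not simplify into the prescribed form of $f$; once this is in place, both coordinates of \eqref{prj:commuteq} match term by term.
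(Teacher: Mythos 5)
Your route is the same as the paper's: reduce Algorithm~\ref{thm:algi} to the two-QR form (your cancellation $S''=U_1^{\top}U_0S_0$ is exactly the paper's ``equivalent form''), compute $U_1\propto(\cos\phi_{R0},\,\delta N_0\sin\phi_{R0})^{\top}$ and $V_1\propto(\cos\phi_{R0},\,\delta^2N_0^2\sin\phi_{R0})^{\top}$, obtain $N_1^2=\dfrac{t(1+t)}{q+t}$ with $t=\delta^2N_0^2$ (this is the paper's formula for $d_1^2$), and convert to $(p,q)$ via $d_*^2(1-\delta^2)=1$; the first coordinate of $\pi(Y_1)=f(\pi(Y_0))$ does indeed close exactly this way, and your emphasis on where $d_*^2(1-\delta^2)=1$ enters is the right one.

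However, two of your final identifications do not follow from your own formulas. First, the ratio $\ctg^2\phi_{L1}/\ctg^2\phi_{R1}=\delta^2N_0^2>1$ proves $\ctg^2\phi_{L1}>\ctg^2\phi_{R1}$, i.e.\ the \emph{reverse} of the printed inequality in \eqref{prj:commuteq}, so the sentence ``which gives the cotangent inequality'' would fail if checked. (The paper has the same sign confusion: its proof asserts $\cos^2\phi_{L1}<\cos^2\phi_{R1}$ although its displayed $U_1,V_1$ imply the opposite; what Theorem~\ref{prj:counter_thm} actually needs is only that both cotangents tend to zero, which your relation $\ctg^2\phi_{L1}=\ctg^2\phi_{R0}/(\delta^2N_0^2)$ supplies.) Second, substituting $N_0^2=d_*^2(1+p)$ and $q=\ctg^2\phi_{R0}$ into your own $\ctg\phi_{R1}=\ctg\phi_{R0}/(\delta^2N_0^2)$ gives $\ctg^2\phi_{R1}=q/\bigl(\delta^4d_*^4(1+p)^2\bigr)$, which is \emph{not} the second component $q/\bigl(\delta^4d_*^2(1+p)\bigr)$ of $f$ as defined in Lemma~\ref{prj:f_lemma}; it does not ``follow directly,'' and the literal identity $\pi(Y_1)=f(\pi(Y_0))$ only holds after the definition of $f$ (or the exponent bookkeeping) is corrected. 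Here too the paper is internally inconsistent---its proof records $q_1=q_0/(\delta^2d_*^2(1+p_0))$, matching neither the printed $f$ nor the correct value---so your computations are the trustworthy part, but a complete argument must either state and fix these mismatches explicitly or replace the equalities of \eqref{prj:commuteq} by the (weaker, still sufficient) decay bounds they are meant to encode.
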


\begin{proof}
We will use the equivalent form of Algorithm \ref{thm:algi}

\begin{equation}
\begin{split}
 U_1, S' &= \mathrm{QR}( (A_0+ D) V_0),\\
V_1, S^{\top}_1 &= \mathrm{QR}( (A_0+ D^{\top}) U_1).\\
\end{split}
\end{equation}
Let us consider $Y_0 = U_0 S_0 V_0^{\top}$,  $d_0 =
\Vert Y_0 - X_*\Vert_F$ and $V_0 = \begin{pmatrix} \cos \phi_{R0} \\ \sin
  \phi_{R0}\end{pmatrix}$.
Then
\begin{equation}
\begin{split}
\Phi(Y_0) &=  \begin{pmatrix} 1 & 0 \\ 0 & \delta d_0\end{pmatrix},
\quad U_1, S' = \mathrm{QR}\left(  \begin{pmatrix} \cos \phi_{R0} \\ \delta d_0 \sin \phi_{R0}\end{pmatrix}\right),\\
V_1, S^{\top}_1 &= \mathrm{QR}\left(  \frac{1}{\sqrt{1 + (\delta^2 d_0^2-1)
  \sin^2 \phi_{R0}}}\begin{pmatrix} \cos \phi_{R0} \\ \delta^2
  d_0^2 \sin \phi_{R0}\end{pmatrix} \right).\\
\end{split}
\end{equation}
Finally we get:

\begin{equation}\label{prj:usveq}
\begin{split}
U_1 S_1 V_1^{\top} =  \begin{pmatrix} \cos \phi_{R0} \\ \delta d_0 \sin
  \phi_{R0} \end{pmatrix} \frac{1}{1 + (\delta^2 d_0^2-1) \sin^2 \phi_{R0}} \begin{pmatrix} \cos \phi_{R0} & \delta^2 d_0^2 \sin
  \phi_{R0} \end{pmatrix}
\end{split}
\end{equation}
It is important to note that $\cos^2 \phi_{L1} < \cos^2 \phi_{R1} <
\cos^2 \phi_{R0} $ in case $1 < \delta d_*$ (and our choice of
$\delta$ provides that).
The equality \eqref{prj:usveq} guarantees if $0 < \sin^2 \phi_{R0}$
then $0 < \sin^2 \phi_{R1}.$ 
So 
\begin{equation}
\begin{split}
d_1^2 &= S_1^2  + \Big(1 - \frac{\cos^2 \phi_{R0}}{\cos^2 \phi_{R0} + \delta^2 d_0^2
  \sin^2 \phi_{R0}}\Big)^2 - \Big(\frac{\cos^2 \phi_{R0}}{\cos^2
  \phi_{R0} + \delta^2 d_0^2
  \sin^2 \phi_{R0}}\Big)^2 = \\
&= \frac{\cos^2 \phi_{R0} + \delta^4 d_0^4
  \sin^2 \phi_{R0} }{\cos^2 \phi_{R0} +\delta^2 d_0^2
  \sin^2 \phi_{R0}} + 1 - \dfrac{2\cos^2 \phi_{R0}}{\cos^2 \phi_{R0} + \delta^2 d_0^2
  \sin^2 \phi_{R0}} = \frac{1+ \delta^2 d_0^2}{1 + \ctg^2 \phi_{R0} /
  (\delta^2 d_0^2)}
\end{split}
\end{equation}
Let us denote $p_0 = d_0^2 / d_*^2 - 1$ and $q_0 =
\ctg^2(\phi_{R0})$. Then

\begin{equation}
\begin{split}
\frac{d_1^2}{d_*^2} - 1 = \frac{1}{d_*^2} \left(\frac{1+ \delta^2 d_0^2}{1 + \ctg^2 \phi_R /
  (\delta^2 d_0^2)}\right) - 1 = \\ \frac{1}{d_*^2} \left(\frac{1+ \delta^2 d_0^2}{1+ q_0 /
  (\delta^2 d_0^2)}\right) - 1 =  \dfrac{1 + \delta^2 p_0}{1 +
    \dfrac{q_0}{\delta^2 d_*^2 (1 + p_0)}  }-1,\\ 
q_1 = \ctg^2(\phi_{R1}) = \frac{\ctg^2(\phi_{R0})}{\delta^2 d_0^2} = \frac{q_0}{\delta^2 d_*^2(1+p_0)}. 
\end{split}
\end{equation}
It completes the proof of \eqref{prj:commuteq}. 
\end{proof}

\begin{theorem}\label{prj:counter_thm}
Let the mappings $\Phi, \pi$ and the set $\Omega$ are defined as in
lemma \ref{prj:pi_lemma}.
Let us consider matrix $Y_0 \in \pi^{-1}(\Omega)$ and the projector
splitting integrator $I(A, D)$ that is defined by \eqref{thm:algi}. Then the sequence $Y_k =
I(Y_{k-1}, \Phi(Y_{k-1})-Y_{k-1})$ converges to $Y_* = d_* X_{\perp}.$
\end{theorem}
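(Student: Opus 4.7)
The plan is to transport the projector splitting dynamics into the two-variable coordinate system $\pi(Y_k)$, where convergence is controlled by the scalar recursion $f$ from Lemma \ref{prj:f_lemma}, and then translate the result back to convergence of the matrix sequence.

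First, iterating the commutation relation $\pi(Y_{k+1})=f(\pi(Y_k))$ established in Lemma \ref{prj:pi_lemma} yields $\pi(Y_k)=f^{*k}(\pi(Y_0))$. The hypothesis $Y_0\in\pi^{-1}(\Omega)$ means $\pi(Y_0)\in\Omega$, and Lemma \ref{prj:f_lemma} guarantees both $f(\Omega)\subset\Omega$ and $f^{*n}(x)\to\{0,0\}$ for every $x\in\Omega$. Applying this with $x=\pi(Y_0)$ gives $\pi(Y_k)\to\{0,0\}$, so the orbit stays in $\Omega$ and both coordinates of $\pi(Y_k)$ vanish in the limit.

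Unpacking the definition of $\pi$, this limit encodes two facts simultaneously: $\|Y_k-X_*\|_F^2\to d_*^2$ and $\ctg^2\phi_R(Y_k)\to 0$, i.e. $\phi_R(Y_k)\to\pi/2$. The auxiliary inequality $\ctg^2\phi_{Lk}<\ctg^2\phi_{Rk}$, also proved in Lemma \ref{prj:pi_lemma}, then forces $\phi_L(Y_k)\to\pi/2$ as well. Consequently both the left and right singular directions of $Y_k$ align (up to sign, which may be absorbed into the scalar $S_k$) with $e_2$, the singular direction of $X_\perp$. Expanding
$$\|Y_k-X_*\|_F^2 = S_k^2 + 1 - 2 S_k\cos\phi_{Lk}\cos\phi_{Rk},$$
the cross term vanishes in the limit (since $\cos\phi_{Lk},\cos\phi_{Rk}\to 0$ and $S_k$ stays bounded because $\|Y_k-X_*\|_F^2$ converges), so $S_k$ is pinned down to a unique positive limit determined by the converged norm. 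Combined with the aligned singular directions, this gives $Y_k\to Y_*$, a scalar multiple of $X_\perp$.

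The main obstacle is not any single calculation but the clean passage from convergence of the two invariants tracked by $\pi$ to convergence of the underlying matrix: one has to keep the signs of the singular vectors consistent along the iteration (a WLOG choice works, since the explicit update \eqref{prj:usveq} from Lemma \ref{prj:pi_lemma} is sign-preserving in the relevant regime) and to rule out pathological oscillation of $S_k$, which follows from the monotone behavior of the two quantities $\|Y_k-X_*\|_F$ and $\phi_R(Y_k)$ encoded in the decay properties of $f$.
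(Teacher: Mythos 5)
Your proposal is correct and follows essentially the same route as the paper: iterate the commutation relation of Lemma \ref{prj:pi_lemma} to obtain $\pi(Y_k)=f^{*k}(\pi(Y_0))$, invoke Lemma \ref{prj:f_lemma} to get $\pi(Y_k)\to\{0,0\}$, and unpack the vanishing cotangents (together with $\Vert Y_k-X_*\Vert_F\to d_*$) into convergence of the matrices; your alignment-and-norm computation merely spells out the final step that the paper states in a single line. Incidentally, pinning down the singular value from $\Vert Y_k-X_*\Vert_F^2\to d_*^2$ gives the scalar $\sqrt{d_*^2-1}=\delta d_*$, so the limit is $\delta d_* X_{\perp}$; the statement's $d_* X_{\perp}$ looks like a typo in the paper, and this affects neither your argument nor the paper's.
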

\begin{proof}
We apply Lemma \ref{prj:pi_lemma}
$$\pi(Y_k) = f(\pi(Y_{k-1})) = f^{*k} (\pi(Y_0) ) $$
and then, using Lemma  \ref{prj:f_lemma}, we have
$$\lim_{k \to \infty} \pi(Y_k) = \lim_{k \to \infty}\limits f^{*k} (\pi(Y_0) ) =\{0, 0\}.$$
Lemma \ref{prj:pi_lemma} guarantees that squared cotangents of left
and right angles go to zero, so $\lim_{k \to \infty}\limits  Y_k = Y_*$.
\end{proof}

\begin{remark}
Note that the condition $1< \delta^2 + \delta^6$ (it requites $\delta >
0.8$) significantly restricts the usage of Theorem
\ref{prj:counter_thm}. But our numerical experiments show that the
projector splitting method might not converge in computer arithmetics
in the case this condition does not hold.
\end{remark}

\section{Numerical examples}
\subsection{Typical case}\label{prj:bunchcase}
We consider the ''linear'' contraction mapping 
\begin{equation*}
\begin{split}
\Phi: \mathbb{R}^{n\times m} \to \mathbb{R}^{n\times m}, \quad \Phi(X) = X_* + Q(X-X_*),
\end{split}
\end{equation*}
where $X$ and $ X^* $ are rank-$r$ $n\times m$ matrices, $Q$ is a linear
operator (on matrices), $n=m=40$, $r = 7$, $\Vert Q \Vert < 0.8$ 
let us denote singular values of $X_*$ as $\sigma_i, 1\leq i \leq r.$
The typical case corresponds to $\sigma_1 / \sigma_r \approx
10$. It shows that the orthogonal part converges quadratically.
\begin{figure}[H]
    \centering
    \scalebox{0.7}{
    \input{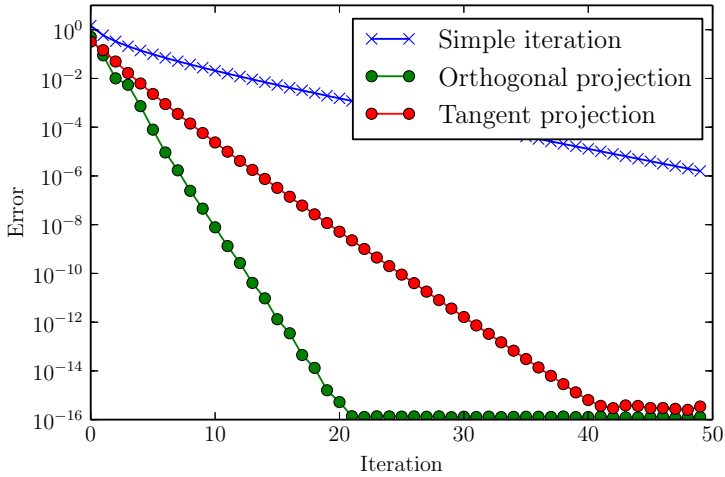}}
\caption{Convergence rates for typical case.}\label{prj:fig11}
\end{figure}
\subsection{Stair case}\label{prj:staircase}
The stair case corresponds to the same $n, m, r$ and exponentially decaying
singular values $\sigma_k = 10^{4-2k}, \quad 1\leq k \leq 7, \quad
\sigma_1 / \sigma_r = 10^{12}$. The results are shown on the Figure \ref{prj:fig2}. The orthogonal component decays quadratically until the next
singular value is achieved.  Meanwhile, the tangent component decays
linearly, and once it hits the same singular value, the orthogonal
component drops again. The steps on the 'stair' correspond to the singular values of $X_*$.

Numerical experiments show that the projector splitting method has ``component-wise''
convergence. Until the first $j$ singular components of the current
point converge to the first $j$
singular components of the fixed point, the last $r-j$ components of
the $X_k$ are ``noisy'' and do not contain useful information.

\begin{figure}[H]
    \centering
    \scalebox{0.7}{
    \input{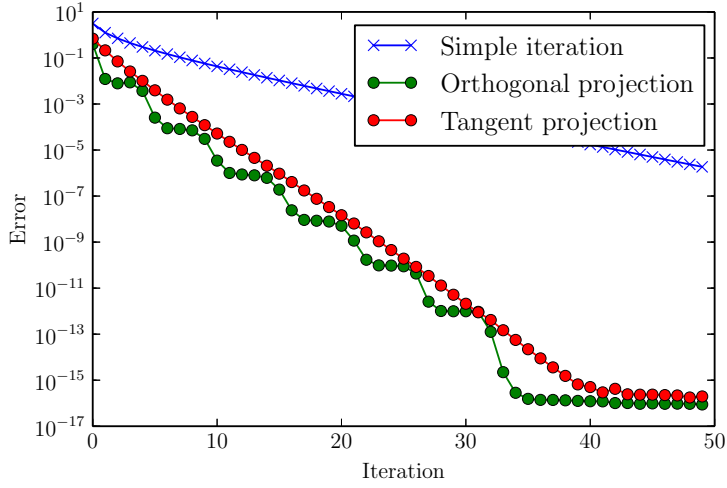}}
\caption{Convergence rates for staircase.}\label{prj:fig2}
\end{figure}
\subsection{Counter-example case}\label{prj:counterexamplecase}
For the following experiment we consider ``nonlinear'' contraction
mapping $$ \Phi(X) = X_* + \delta \Vert X - X_* \Vert X_{\perp},$$
where $X$ is a $2\times 2$ matrix, $X_* = \begin{pmatrix} 1 & 0 \\ 0 &
  0\end{pmatrix},$ $X_{\perp} = \begin{pmatrix} 0 & 0 \\ 0 &
  1\end{pmatrix},$ $\delta = 0.5.$ It shows that original projector
splitting method fails and converges to another stationary
point. Nevertheless this stationary point is unstable and to show that
we introduce a perturbed projector splitting method:
$$Y_{k+1}^{pert} = I(Y_k^{pert}, \Phi(Y_k^{pert}) -Y_k^{pert} + R_k),$$
where $R_k$ is a $n\times m$ matrix with elements taken from the normal
distribution\\
$\mathcal{N}\left(0, \frac{1}{100nm}\Vert\Phi(Y_k^{pert}) -Y_k^{pert} \Vert\right)$.
The convergence is shown at Figure \ref{prj:fig3}: 
\begin{figure}[H]
    \centering
    \scalebox{0.7}{
    \input{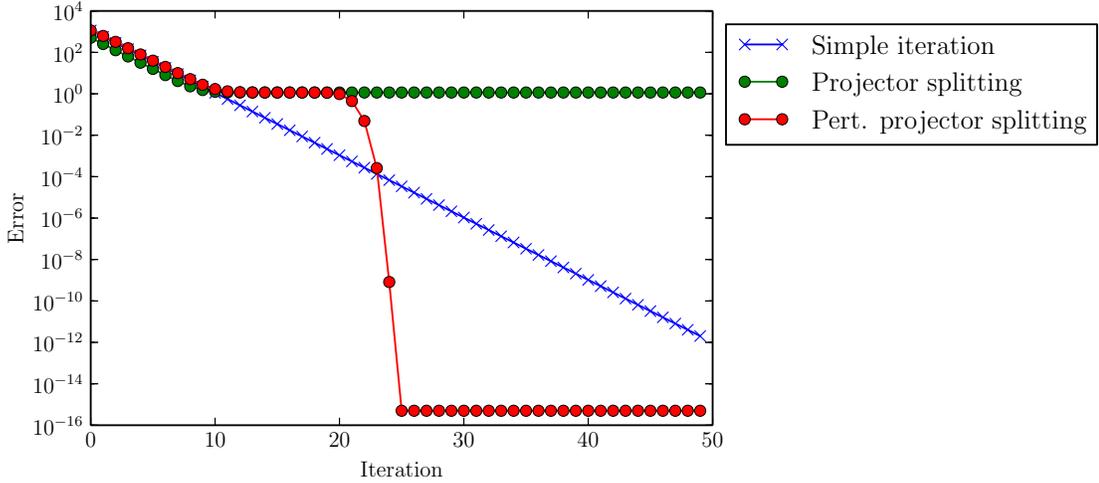}}
\caption{Convergence rates for 'bad functional'}\label{prj:fig3}
\end{figure}

\section{Related work}

Projector splitting method arises naturally as a numerical
integrator for dynamical low-rank approximation of ODE \cite{lubich-timett-2015, lubich-prj-2014} and was
originally proposed in \cite{lubich-lrappr-2007}. 
In this paper we focused on the properties of the projector splitting
method as the retraction onto low-rank
manifold \cite{absil-opt-2009}. It was compared with another retraction
methods in the survey \cite{oseledets-survey-2015}.   

Close results about convergence in the presence of small singular values were obtained in
\cite{lubich-smallsing-2015}. The problem formulation is as follows. Let $X(t)$
be the solution of the ordinary differential equation (ODE):

\begin{align*}
\dot{X}(t) = F(t, X(t)), \quad X(0) = X_0,& \quad X(t) \in \mathbb{R}^{n\times m}, &t\in [0, T], \\
\Vert F(t, X_1) - F(t, X_2)\Vert \leq L,& \quad \forall X_1, X_2 \in \mathbb{R}^{n\times
  m}, &\forall t\in [0, T],\\
\Vert F(t, X) \Vert \leq B,& \quad \forall X \in \mathbb{R}^{n\times
  m}, &\forall t\in [0, T]. \\
\end{align*}
We want to obtain approximation to stationary point $X_*$: $F(t,
X_*) = 0$. We seek for low-rank approximation $Y(t)$ to $X(t)$ and $Y(t)$
satisfies the modified ODE: 
$$\label{prj:modODE}
\dot{Y}(t) = P(Y(t))F(t, Y(t)), \quad Y(0) = Y_0,\quad \rank Y(t) = r,
$$
where $P(Y(t))$ is a projector onto the subspace determined by
$Y(t)$. 
\cite[Theorem 2.1]{lubich-smallsing-2015} states that numerical
approximation $ \widetilde{Y}(t)$ is stable despite the presence of small
singular values of $Y(t)$. However, this result cannot be directly
applied to optimization problems and $F$ should satisfy certain restrictions.

Another close result is a guaranteed local linear convergence for alternating least
squares optimization scheme in convex optimization problems \cite{uschmajew-alsconv-2013}.
Also local convergence results are obtained for modified alternating least squares scheme, such as
maximum block improvement \cite{uschmajew-mbi-2015} and alternating
minimal energy \cite{dolgov-amen-2014}, but for these methods the low-rank
manifold changes at every step.

\section{Conclusions and perspectives}
Our numerical results show that the
staircase is a typical case for linear contraction
mappings. However, conditions of the proved theorem cover only
convergence at the last ``step'' on the stair. 
We plan to formulate conditions for the contraction mapping $\Phi$ for
which ``component-wise'' convergence as for stair case is
guaranteed. Our current hypothesis is that the ``extended'' mapping $\Phi_m(X, X_*)$ should also
satisfy the contraction property for $X_*$. 
 It will be very interesting to explain the nature of the stair
case convergence.

Another important topic for further research is to determine a viable
``a-poste\-riori'' error indicator, since we do not know the orthogonal
component. This will allow to develop rank-adaptive
projector splitting based scheme.
 
The main conclusion of this paper is that projected iterations are
typically as fast as the unprojected ones. We plan to generalize the
paper results for tensor case. 

\section*{Acknowledgements}
 This work was supported by Russian Science Foundation grant 14-11-00659.
We thank Prof. Dr. Christian Lubich and Hanna Walach for fruitful
discussions about projector splitting scheme and retractions on a
low-rank manifold.
We also thank Maxim Rakhuba for his help for improving the manuscript.

\bibliographystyle{siam}
\bibliography{conv_proof}
\end{document}